\def\be{\begin{equation}}
\def\ee{\end{equation}}
\def\bse{\begin{subequations}}
\def\ese{\end{subequations}}
\let\er\eqref
\let\be\beta
\newcommand{\R}{{\mathbb R}}
\newtheorem{theorem}{Theorem}
\newtheorem{lemma}[theorem]{Lemma}
\newtheorem{remark}[theorem]{Remark}
\def\bse{\begin{subequations}}
\def\ese{\end{subequations}}
\title{On selection dynamics for a nonlocal phenotype-structured model}
\author{ {\bf\large Shen Bian}\thanks{Corresponding author. Email address: bianshen66@163.com}  \qquad  {\bf\large Jiale Bu}\thanks{ Email address: 2022201105@buct.edu.cn}  \\
\
\\
{\it\small Department of Mathematical Sciences, Beijing University of Chemical Technology} \\
{\it\small Beijing, 100029, P.R. CHINA }\\
\vspace{1.0mm} }
\date{}
\begin{document}
\let\cleardoublepage\clearpage%svinei kenes selides

\maketitle

\begin{abstract}
This paper is devoted to the analysis of the long-time behavior of a phenotypic-structured model where phenotypic changes do not occur. We give a mathematical description of the process in which the best adapted trait is selected in a given environment created by the total population. It is exhibited that the long-time limit of the unique solution to the nonlocal equation is given by a Dirac mass centered at the peak of the fitness within or at the boundary of the region where the initial data is positive. Specially, If the peak of the fitness can't be in the support of the solution, then the infinite time blow-up of the solution occurs near the boundary of the region where the solution is positive. Moreover, our numerical results facilitate a deeper understanding of identifying the position of the centers.
\end{abstract}
%%%%%%%%%%%%%%%%%%%%%%%%%%%%%%%%%%%%%%%%%%%%%%%%%%%%%%%%%%%%%%%%%%%%%%%%%%%%%%%%%%%%%%%%%%%%%%%%%%%%%%%%%%%%%%%%%%%%%%%%%%%%%%%%%%%%%%%%%%%%%%%%%%%%%%%%%%%%%%%%%%%%%%%%%%%%%%%%%%%%%%%%%%%%%%%%%%%%%%%%%%%%%%%%%%%%%%%%%%%%%%%%%%%%%%%%%%%%%%%%%%%%%%%%%%%%%%%%%%%%%%%%%%%%%%%%%%%%%%%%%%%%%%%%%%%%%%%%%%%%%%%%%%%%%%%%%%%%%%%%%%%%%%%%%%%%%%%%%%%%%%%%%%%%%%%%%%%%%%%%%%%%%%%%%%%%%%%%%%%%%%%%%%%%%%%%%%%%%%%%%%%%%%%%%%%%%%%%%%%%%%%%%%%%%%%%%%%%%%%%%%%%%%%%%%%%%%%%%%%%%%%%%%%%%%%%%%%%%%%%%%%%%%%%%%%%%%%%%%%%%%%%%%%%%%%%%%%%%%%%%%%%%%%%%%%%%%%%%%%%%%%%%%%%%%%%%%%%%%%%%%%%%%%%%%%%%%%%%%%%%%%%%%%%%%%%%%%%%%%%%%%%%%%%%%%%%%%%%%%%%%%%%%%%%%%%%%%%%%%%%%%%%%%%%%%%%%%%%%%%%%%%%%%%%%%%%%%%%%%%%%%%%%%%%%%%%%%%%%%%%%%%%%%%%%%%%%%%%%%%%%%%%%%%%%%%%%%%%%%%%%%%%%%%%%%%%

\section{Introduction}
In the theory of adaptive evolution (\cite{BMP09,CLL15,D04, DJM05,IM18,JR11,LLH13,LVL18,LLC15,P06,PB08}), a population is structured by a physiological parameter (it is named a trait \cite{P06}), which is hereinafter denoted by . This parameter can represent the size of an organ of the individuals, a proportion of resources utilized for growth and multiplication, or any relevant phenotypic parameter that is helpful to describe the adaptation of the individuals, that is, their ability to utilize the nutrients for reproduction (\cite{CR11,D04,P15}). The main elements in this theory are (i) the selection principle which favors the population with the most well-adapted trait, and (ii) mutations that enable off-springs to have slightly different traits from their mother. These two effects are intensively studied by adaptive dynamics (\cite{BB96,GMK97,JS23,LMS19,LP20,P06,PB08}). A general topic of selection and mutation can also be found in \cite{B00} (in particular, population geneticists might prefer the assumption that mutations are rare rather than small).

In this paper, we mainly focus on an extension of a selection principle. We use $u(x,t)$ to represent the density of individuals with the trait $x \in \R$, and $\rho(t)$ to signify an environment shared by the entire population. We assume that the reproduction rate depends on both the trait and the total population, where the competition among them leads to a decrease in the birth rate, that is, $\frac{b(x)}{1+\rho(t)}$. Moreover, the death rate also depends on the trait and is proportional to the total number of the population, namely
\begin{align}\label{selection11}
\left\{
      \begin{array}{ll}
      \frac{\partial}{\partial t} u(x,t)=\left( \frac{b(x)}{1+c_0 \rho(t)}-d(x) \rho(t) \right) u(x,t)= R(x,\rho(t))u(x,t), ~~& x\in \R,~t>0, \\
      \rho(t)=\int_{\R} u(x,t)dx, ~~& x \in \R,~t>0, \\
      u(x,0)=u_{0}(x) \ge 0, ~~& x \in \R.
      \end{array}\right.
\end{align}
The term $\frac{b(x)}{1+ c_0 \rho(t)}-d(x) \rho(t)$ can be interpreted as the fitness of individuals with the trait $x$ being given the environment created by the total population.

Within the framework of \eqref{selection11}, a mathematical description of phenotypic adaptation can be achieved by examining the long-time behavior of the population density. In this respect, the case where $c_0=0$ (the fitness is linear with respect to $\rho(t)$) has been extensively studied (\cite{ACC22,JS23,LP20,LPS21,P15}). In \cite{LP20}, Lorenzi and Pouchol considered the case of $R(x,\rho(t))=b(x)-\rho(t)$ where $b(x)$ is the net per capita growth rate of the individuals in the phenotypic state $x$ and the saturating term $-\rho(t)$ models the limitations on population growth imposed by carrying capacity constraints. By utilizing the linearity of $R(x,\rho(t))$ in $\rho(t)$ and the semi-explicit formula of $u(x,t)$, they demonstrated the long-time asymptotic behavior of the solution to a selection principle. Later, similar results were obtained in \cite{LPS21,ACC22,JS23} where $R(x,\rho(t))$ is equipped with the form of $b(x)-d(x) \rho(t)$. Comparing to the case of $c_0=0$, we assume $c_0=1$ in our work and introduce a selection principle in a broader context. We shall mainly address the difficulty brought by the nonlinearity of the fitness function with respect to $\rho(t)$ and thereby the long-time limit of the solution is obtained. Specifically, we show that
\begin{itemize}
\item[(i)] The problem \er{selection11} admits a unique global solution $u(x,t)$ and keeps its support conserved at any time.
\item[(ii)] The long-time limit of the solution converges to a Dirac mass concentrated on the set of $x$'s such that the peak of the fitness is attained. (see Theorem \ref{th1}).
\item[(iii)] If the peak of the fitness can't be attained in the support of the solution, then the infinite time blow-up of the solution occurs near the boundary of the support. (see Section \ref{Numerical}).
\end{itemize}
Resorting to the structure of \er{selection11}, we are able to find the semi-explicit formula the solution. Therefore, the result (i) above can be obtained. When proving the asymptotic behavior stated in (ii), we develop a Lyapunov functional to deal with the challenge of the nonlinearity of the fitness with respect to $\rho(t).$ By exploiting the decreasing of the fitness with respect to $\rho(t)$, we prove our desired results via the monotonicity of the Lyapunov functional with respect to time.

The structure of this paper is organized as follows. We commence with a physiologically structured population where a selection principle can be proved (see Theorem \ref{th1}) in Section \ref{Longtime}. Then a sample of numerical solutions that verify the theoretical analysis is constructed in Section \ref{Numerical}. Section \ref{Conclude} concludes the paper by providing a brief overview of possible research perspectives.

%%%%%%%%%%%%%%%%%%%%%%%%%%%%%%%%%%%%%%%%%%%%%%%%%%%%%%%%%%%%%%%%%%%%%%%%%%%%%%%%%%%%%%%%%%%%%%%%%%%%%%%%%%%%%%%%%%%%%%%%%%%%%%%%%%%%%%%%%%%%%%%%%%%%%%%%%%%%%%%%%%%%%%%%%%%%%%%%%%%%%%%%%%%%%%%%%%%%%%%%%%%%%%%%%%%%%%%%%%%%%%%%%%%%%%%%%%%%%%%%%%%%%%%%%%%%%%%%%%%%%%%%%%%%%%%%%%%%%%%%%%%%%%%%%%%%%%%%%%%%%%%%%%%%%%%%%%%%%%%%%%%%%%%%%%%%%%%%%%%%%%%%%%%%%%%%%%%%%%%%%%%%%%%%%%%%%%%%%%%%%%%%%%%%%%%%%%%%%%%%%%%%%%%%%%%%%%%%%%%%%%%%%%%%%%%%%%%%%%%%%%%%%%%%%%%%%%%%%%%%%%%%%%%%%%%%%%%%%%%%%%%%%%%%%%%%%%%%%%%%%%%%%%%%%%%%%%%%%%%%%%%%%%%%%%%%%%%%%%%%%%%%%%%%%%%%%%%%%%%%%%%%%%%%%%%%%%%%%%%%%%%%%%%%%%%%%%%%%%%%%%%%%%%%%%%%%%%%%%%%%%%%%%%%%%%%%%%%%%%%%%%%%%%%%%%%%%%%%%%%%%%%%%%%%%%%%%%%%%%%%%%%%%%%%%%%%%%%%%%%%%%%%%%%%%%%%%%%%%%%%%%%%%%%%%%%%%%%%%%%%%%%%%%%%%%%%%%%%%%%%%%%%%%%%%%%%%%%%%%%%%%%%%%%%%%%%%%%%%%%%%%%%%%%%%%%%%%%%%%%%%%%%%%%%%%%%%%%%%%%%%%%%%%%%%%%%%%%%%%%%%%%%%%%%%%%%%%%%%%%%%%%%%%%%%%%%%%%%%%%%%%%%%%%%%%%%%%%%%%%%%%%%%%%%%%%%%%%%%%%%%%%%%%%%%%%%%%%%%%%%%%%%%%%%%%%%%%%%%%%%%%%%%%%%%%%%%%%%%%%%%%%%%%%%%%%%%%%%%%%%%%%%%%%%%%%%%%%%%%%%%%%%%%%%%%%%%%%%%%%%%%%%%%%%%%%%%%%%%%%%%%%%%%%%%%%%%%%%%%%%%%%%%%%%%%%%%%%%%%%%%%%%%%%%%%%%%%%%%%%%%%%%%%%%%%%%%%%%%%%%%%%%%%%%%%%%%%%%%%%%%%%%%%%%%%%%%%%%%%%%%%%%%%%%%%%%%%%%%%%%%%%%%%%%%%%%%%%%%%%%%%%%%%%%%%%%%

\section{Asymptotic analysis}

In this section, we study the asymptotic behaviour of the solution to the Cauchy problem \er{selection11}, which is stated in Theorem \ref{th1}. In order to prove the main result, we first compile the preparatory lemma which has been proved in \cite{P06}.

\begin{lemma}[\cite{P06}]\label{lem:lemma1}
    let $p\in C^1(\R_+)$ and it satisfies
    $\int_{0}^{\infty} \left|\frac{dp(t)}{dt}\right| dt <\infty,$
    then $p(t)$ admits a limit $L$ as $t\to\infty$.
\end{lemma}
\iffalse
\begin{lemma}[\cite{P06}]\label{lem:lemma2}
Let $p\in L^{\infty}(\R^+)$ which satisfy $\int_{0}^{\infty}\left(\frac{dp(t)}{dt}\right)_{+}dt<\infty$,then $p(t)$ is Total Variation bounded and thus admits a limit $L$ as $t\to\infty$.
\end{lemma}
\fi

To demonstrate the selection principle, we firstly define
\begin{align}
\Omega:=\{x \in \R ~\big |~ u_0(x)>0 \}
\end{align}
and
\begin{align}\label{GG}
G(x,\rho):=\frac{b(x)}{1+\rho(t)}-d(x)\rho(t)
\end{align}
which will be used throughout this section. We assume that $c_0=1$, $b(x), d(x) \in C(\R)$ and there are $b_m, b_M, d_m$ and $d_M$ such that
\begin{align}\label{assume1}
0<b_m<b(x)<b_M,\quad 0<d_m<d(x)<d_M,\quad x \in \R.
\end{align}
We suppose additionally that
\begin{align}\label{assume3}
\frac{b(\bar{x})}{d(\bar{x})}=\max_{x\in \Omega} \frac{b(x)}{d(x)}~~\text{is attained for a single} \bar{x} \in \Omega
\end{align}
and there exists $\bar{\rho}$ such that
\begin{align}\label{assume2}
\frac{b(\bar{x})}{d(\bar{x})}=\bar{\rho}(1+\bar{\rho}).
\end{align}
If $\Omega$ is unbounded, it is still necessary to impose that there exists $R>0$ such that for $R$ large enough and $\rho \to \overline{\rho}$,
\begin{align}\label{assume5}
\alpha_R:=\max_{|x| \ge R} \left[\frac{b(x)}{1+\rho}-d(x)\rho \right]<0.
\end{align}

Under assumptions \er{assume1}-\er{assume5}, there exists a unique non-negative solution $u(x,t) \in C\left( \R_+;L^1(\R) \right)$ of the Cauchy problem \er{selection11} \cite{DJM08,P06}. Besides, the support of the solution remains unchanged as time progresses because of the semi-explicit formula
\begin{align}\label{star}
u(x,t)=u_0(x)~e^{\int_0^t \frac{b(x)}{1+\rho(s)}-d(x) \rho(s) ds  }.
\end{align}
In addition, the best adapted trait (highest reproduction rate) will be selected. Namely, the solution concentrates on the set of $x$'s such that the peak of the fitness is attained when time goes to infinity, as established by the following theorem.

\begin{theorem}[Asymptotic behaviour]\label{th1}
Under assumptions \er{assume1}-\er{assume5}, we assume that there exist $0<\bar{\rho}_m<\bar{\rho}_M<\infty$ such that
\begin{align}\label{assume0}
\bar{\rho}_m \le \rho(0) \le \bar{\rho}_M,
\end{align}
Then the solution to \er{selection11} satisfies
\begin{align}\label{0519}
\rho_m \le \rho(t) \le \rho_M,\quad \forall~ t\ge 0
\end{align}
where
\begin{align}
\rho_m=\min \left( r_m, \rho(0) \right),~~ (1+r_m)r_m=\frac{b_m}{d_M}  ,\\
\rho_M=\max \left( r_M,\rho(0)  \right),~~ (1+r_M)r_M=\frac{b_M}{d_m}.
\end{align}
Furthermore, we have
\begin{align}\label{05191}
\rho(t) \to \bar\rho,~~~ u(x,t) \rightharpoonup \overline{\rho}\delta(x-\bar{x}),~~ \text{as}~~ t \to \infty.
\end{align}
\end{theorem}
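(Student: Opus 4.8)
The plan is to establish the bounds on $\rho(t)$ first, then use them to construct a Lyapunov functional whose monotonicity forces convergence. For the \emph{a priori} bounds \er{0519}, I would differentiate $\rho(t)=\int_\R u(x,t)\,dx$ using the evolution equation \er{selection11}, obtaining
\begin{align}\label{plan:rhodot}
\frac{d\rho}{dt}=\int_\R G(x,\rho(t))\,u(x,t)\,dx.
\end{align}
Since $G(x,\rho)=\frac{b(x)}{1+\rho}-d(x)\rho$ is decreasing in $\rho$ and the assumption \er{assume1} gives $\frac{b_m}{1+\rho}-d_M\rho \le G(x,\rho)\le \frac{b_M}{1+\rho}-d_m\rho$, the integrand is bounded between $\rho(t)$ times these pointwise bounds. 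The quantities $r_m,r_M$ defined in the statement are precisely the zeros of these bounding functions, so whenever $\rho$ drops to $r_m$ the lower bound on $\frac{d\rho}{dt}$ is nonnegative, and symmetrically at $r_M$ the upper bound is nonpositive. A standard invariant-region argument then traps $\rho(t)$ in $[\rho_m,\rho_M]$ for all $t\ge0$, which also guarantees the solution does not blow up or vanish in finite time.

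The core of the argument is the convergence \er{05191}, and the key device is a Lyapunov functional. Motivated by \er{assume2} and \er{assume3}, I would define
\begin{align}\label{plan:lyap}
\mathcal{H}(t):=\int_\R \left(\frac{b(\bar{x})}{d(\bar{x})}-\frac{b(x)}{d(x)}\right) d(x)\,u(x,t)\,dx,
\end{align}
or a closely related weighted functional built from $\frac{b(x)}{d(x)}$, chosen so that the integrand weight is nonnegative on $\Omega$ and vanishes only at $\bar{x}$ by \er{assume3}. The goal is to show $\frac{d\mathcal{H}}{dt}$ has a definite sign (using that $G(x,\rho)$ is decreasing in $\rho$, together with the algebraic relation \er{assume2} linking $\bar\rho$ to the maximal ratio), so that $\mathcal{H}(t)$ is monotone and hence converges; combined with the bounds on $\rho$ this yields $\int_0^\infty|\frac{d\rho}{dt}|\,dt<\infty$. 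Lemma \ref{lem:lemma1} then supplies a limit $\rho(t)\to\rho_\infty$.

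Next I would identify $\rho_\infty=\bar\rho$. From \er{plan:rhodot} and convergence of $\rho$, the integrand must concentrate where $G(x,\rho_\infty)=0$; using the semi-explicit formula \er{star}, the exponent $\int_0^t G(x,\rho(s))\,ds$ grows fastest precisely at the trait maximizing $\frac{b(x)}{1+\rho_\infty}-d(x)\rho_\infty$, which by \er{assume3} and the calibration \er{assume2} is the single point $\bar{x}$. Mass escaping to $|x|\ge R$ is excluded by the negativity condition \er{assume5}, so all mass concentrates at $\bar{x}$; passing $\rho_\infty$ into \er{assume2} forces $\rho_\infty=\bar\rho$, and the total mass of the limiting measure is $\bar\rho$, giving $u(x,t)\rightharpoonup\bar\rho\,\delta(x-\bar{x})$ in the weak sense of measures.

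The main obstacle is the nonlinearity of $G$ in $\rho$: unlike the case $c_0=0$, the exponent in \er{star} is not simply affine in $\int_0^t\rho(s)\,ds$, so one cannot read off concentration directly from an explicit formula. The delicate step is therefore choosing the Lyapunov functional \er{plan:lyap} so that the sign of its derivative is controlled \emph{uniformly} by the monotonicity of $G$ in $\rho$ and the bounds \er{0519}, rather than by any explicit integration. Making the selection of $\bar{x}$ rigorous — ruling out concentration at other points and at the boundary of $\Omega$ — will require combining the strict maximality in \er{assume3} with the trapping of $\rho$ in a compact interval bounded away from $0$ and $\infty$.
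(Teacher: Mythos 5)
Your skeleton (a priori bounds, a Lyapunov functional, identification of the limit, tightness via \er{assume5}) matches the paper's, and your argument for \er{0519} is essentially the paper's Step 1. However, the core device fails as proposed. The functional
\begin{align*}
\mathcal{H}(t)=\int_\R \Bigl(\tfrac{b(\bar{x})}{d(\bar{x})}-\tfrac{b(x)}{d(x)}\Bigr)\, d(x)\,u(x,t)\,dx
\end{align*}
is not monotone: since $G(x,\rho)=\frac{d(x)}{1+\rho}\bigl(\frac{b(x)}{d(x)}-\rho(1+\rho)\bigr)$, its derivative is
\begin{align*}
\frac{d\mathcal{H}}{dt}=\int_\R \Bigl(\tfrac{b(\bar{x})}{d(\bar{x})}-\tfrac{b(x)}{d(x)}\Bigr)\,\frac{d(x)^2}{1+\rho(t)}\Bigl(\tfrac{b(x)}{d(x)}-\rho(t)\bigl(1+\rho(t)\bigr)\Bigr)u(x,t)\,dx,
\end{align*}
and whenever $\rho(t)<\bar{\rho}$ the factor $\frac{b(x)}{d(x)}-\rho(1+\rho)$ changes sign on $\Omega$ while your weight stays nonnegative, so the integrand has no definite sign. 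Any time-independent weight built from $b/d$ has this defect. The paper's functional $\int_\R \bigl[\frac{b(x)}{d(x)}-P(\rho(t))\bigr]u\,dx$ uses a \emph{time-dependent} shift $P(\rho)=\frac13\rho^2+\frac12\rho$ chosen precisely to solve $\rho P'+P=\rho(1+\rho)=:Q(\rho)$; this makes the chain-rule term combine with the transport term into the perfect square $\int_\R \frac{1+\rho}{d(x)}\,G(x,\rho)^2\,u\,dx\ge 0$ (see \er{eq8}). The monotonicity is structural, not a consequence of $G$ being decreasing in $\rho$.

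Second, even granted a bounded monotone functional, your claim that this ``yields $\int_0^\infty|\rho'(t)|\,dt<\infty$'' is a non sequitur, and this is exactly where the real difficulty sits. Monotonicity gives \er{eq00}, i.e. $\int_0^\infty I(t)\,dt<\infty$ with $I(t)=\int_\R G^2u\,dx$, whereas Cauchy--Schwarz only gives $|\rho'(t)|\le\sqrt{\rho(t)}\,I(t)^{1/2}$, and integrability of $I$ does not imply integrability of $I^{1/2}$ (take $I(t)=(1+t)^{-2}$). So Lemma \ref{lem:lemma1} cannot be applied to $\rho$ itself. The paper closes this with a second functional, $\int_\R\bigl[\frac{b}{d}-Q(\rho)\bigr]^2u\,dx$: its time derivative is bounded by $C\,I(t)$, hence integrable, so Lemma \ref{lem:lemma1} applies to it; since the functional is itself $O(I(t))$ and $\liminf_{t\to\infty} I=0$, its limit must be $0$; then convergence of the first functional forces $\rho(t)\,(Q-P)(\rho(t))$ to converge, and strict monotonicity of $\rho\mapsto\rho\,(Q-P)(\rho)$ on $[\rho_m,\rho_M]$ delivers $\rho(t)\to\rho^*$. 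Finally, your identification step is also too loose: concentration on $\{G(x,\rho_\infty)=0\}$ alone does not force $\rho_\infty=\bar{\rho}$, because the sub-maximal level set $\{b/d=\rho_\infty(1+\rho_\infty)\}$ can be nonempty when $\rho_\infty<\bar{\rho}$; the paper instead rules out $\rho^*\ne\bar{\rho}$ by contradiction (Step 4), showing that otherwise $\rho$ would decay to $0$ or grow exponentially, violating \er{0519}.
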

\begin{proof}
The proof can be divided into 5 steps. In Step 1, we first give a priori estimate for $\rho(t)$ which plays very important role in proving the long-time limit of $\rho(t)$. Subsequently, a Lyapunov functional is constructed in Steps 2-3 to deduce that there exists a limit for $\rho(t)$ as time goes to infinity. Then, Steps 4-5 identify the limit for $\rho(t)$ and the weak limit for $u(x,t)$.

{\it\textbf{Step 1}} (A priori estimate for $\rho(t)$) \quad In order to prove \er{0519}, we first integrate the equation \er{selection11} in $x$ to obtain
\begin{align}\label{eq3}
    \frac{d}{dt}\rho(t)=\int_{\R} \left(\frac{b(x)}{1+\rho(t)}-d(x)\rho(t)\right) u(x,t) dx.
\end{align}
Using assumption \er{assume1} we further estimate
\begin{align}\label{eq4}
     \frac{d \rho}{dt} & \le \int_{\R} \left( \frac{b_M}{1+\rho(t)}-d_m \rho(t)  \right) u(x,t) dx \nonumber \\
      & = \left( \frac{b_M}{d_m}-\rho (1+\rho)  \right) \frac{d_m \rho}{1+\rho}
\end{align}
and
\begin{align}\label{eq44}
\frac{d \rho}{dt} & \ge \int_{\R} \left( \frac{b_m}{1+\rho(t)}-d_M \rho(t)  \right) u(x,t) dx \nonumber \\
      & = \left( \frac{b_m}{d_M}-\rho (1+\rho) \right) \frac{d_M \rho}{1+\rho}.
\end{align}
Therefore we find that for all times
\begin{align}\label{eq5}
    \min \left( r_m, \rho(0) \right) \le \rho(t) \le \max \left( r_M,\rho(0)  \right)
\end{align}
where $r_m(1+r_m)=\frac{b_m}{d_M}$ and $r_M(1+r_M)=\frac{b_M}{d_m}$.

{\it\textbf{Step 2}} (A Lyapunov functional) \quad Based on \er{eq5}, a Lyapunov functional is constructed to derive the existence of the limit for $\rho(t).$ We first introduce a function
\[
P(\rho)=\frac{1}{3}\rho^2+\frac{1}{2}\rho
\]
satisfying
    \begin{align}\label{eq7}
    \rho P(\rho)'+P(\rho)=Q(\rho):=\rho^2+\rho
    \end{align}
    on the interval $[\rho_m, \rho_M]$. Then we compute
    \begin{align}\label{eq8}
    & \frac{d}{dt} \int_{\R} \left[\frac{b(x)}{d(x)}-P(\rho(t))\right]u(x,t) dx \nonumber \\
    =& \int_{\R} \left( \frac{b(x)}{d(x)}-P(\rho(t))-\rho(t)P'(\rho(t))    \right) \left(\frac{b(x)}{1+\rho(t)}-d(x)\rho(t)\right)u(x,t) dx \nonumber \\
    = & \int_{\R} \frac{1+\rho(t)}{d(x)} \left[\frac{b(x)}{1+\rho(t)}-d(x)\rho(t) \right]^2 u(x, t) dx \ge 0.
    \end{align}
Consequently, $\int_{\R} \left[\frac{b(x)}{d(x)}-P(\rho(t))\right]u(x,t) dx$ is bounded and non-decreasing in time, and thus converges as $t \to \infty $. Namely,
    \begin{align}
    \int_{\R} \left[\frac{b(x)}{d(x)}-P(\rho(t)) \right]u(x,t) dx \to L \in \R,  \label{eq11}\\
    \int_{0}^{\infty} \int_{\R}\left[\frac{b(x)}{1+\rho(t)}-d(x)\rho(t)\right]^2 u(x,t)dxdt <\infty.\label{eq00}
    \end{align}

{\it\textbf{Step 3}} (Existence of the limit for $\rho(t)$) \quad This part focuses on establishing the convergence of $\rho(t)$ as time goes infinity. To this end, recalling the function $ Q(\rho)=\rho^2+\rho$ in \er{eq7} and we do the calculations
\begin{align}\label{eq12}
    &\frac{d}{dt}\int_{\R} \left[\frac{b(x)}{d(x)}-Q(\rho(t))\right]^2 u(x,t)dx \nonumber \\
  = & \int_{\R} \left[\frac{b(x)}{d(x)}-\rho(t)\left(1+\rho(t)\right)\right]^2 \left[\frac{b(x)}{1+\rho(t)}-d(x)\rho(t)\right] u(x,t)dx \nonumber \\
  & -2\left( 2\rho(t)+1 \right) \int_{\R} \left[\frac{b(x)}{d(x)}-\rho(t)\left(1+\rho(t)\right) \right]u(x,t)dx \cdot \int_{\R} \left[ \frac{b(x)}{1+\rho(t)}-d(x)\rho(t) \right]u(x,t)dx \nonumber \\
  =& I_1+I_2.
    \end{align}
Define
\[
   I:=\int_{\R}\left[\frac{b(x)}{1+\rho(t)}-d(x)\rho(t)\right]^2 u(x,t)dx,
\]
the use of the boundedness of $b(x),d(x)$ and $\rho(t)$ in \er{assume1} and \er{eq5} results in
    \begin{align}\label{11}
      |I_1| \le C_1 I,
    \end{align}
    where $C_1$ is a uniformly bounded constant depending on $b_m,b_M,d_m,d_M$ and $\rho_m,\rho_M.$ Moreover, applying Cauchy-Schwarz inequality for $I_2$ we deduce that
    \begin{align}
     |I_2|= & \left|2\left( 2\rho(t)+1 \right) \int_{\R} \frac{1+\rho(t)}{d(x)} [\frac{b(x)}{1+\rho(t)}-d(x) \rho(t) ] u(x,t)dx \cdot \int_{\R} [ \frac{b(x)}{1+\rho(t)}-d(x)\rho(t) ]u(x,t)dx \right| \nonumber \\
    \le & C_2 \left( \int_{\R} [ \frac{b(x)}{1+\rho(t)}-d(x)\rho(t)] u(x,t) dx \right)^2 \nonumber \\
    = & C_2 \left( \int_{\R} [ \frac{b(x)}{1+\rho(t)}-d(x)\rho(t) ] \sqrt{u(x,t)} \sqrt{u(x,t)} dx \right)^2 \nonumber \\
    \le & C_2 \rho(t)  \int_{\R}[\frac{b(x)}{1+\rho(t)}-d(x)\rho(t)]^2 u(x,t)dx, \label{22}
    \end{align}
   where $C_2$ is a constant depending on $\rho_m, \rho_M, d_m,d_M$. Taking \er{11} and \er{22} into account we conclude
    \begin{align}
  \left|\frac{d}{dt}\int_{\R} [\frac{b(x)}{d(x)}-Q(\rho(t)) ]^2 u(x,t)dx \right| \le C I.
   \end{align}
    So \er{eq00} which states that $\int_0^\infty I dt<\infty$ enables us to prove that
   \begin{align}\label{eq22}
    \frac{d}{dt}\int_{\R} [\frac{b(x)}{d(x)}-Q(\rho(t)) ]^2 u(x,t)dx \in L^1(\R_{+}).
    \end{align}
  Therefore, utilizing Lemma \ref{lem:lemma1} gives rise to
  \begin{align}
    \int_{\R} \left[\frac{b(x)}{d(x)}-Q(\rho(t)) \right]^2 u(x,t)dx \to 0,\quad \mbox{as}~~t \to \infty
  \end{align}
    which also arrives at
    \begin{align}\label{eq23}
    \int_{\R}\left| \frac{b(x)}{d(x)}-Q(\rho(t)) \right| u(x, t)dx \to 0, \quad \mbox{as}~~ t \to \infty
    \end{align}
    as a result of Cauchy-Schwarz inequality. Finally, combined with \er{eq11} we have
    \begin{align}\label{eq24}
    \int_{\R} {[\frac{b(x)}{d(x)}-P(\rho(t))]u(x, t)dx}=\int_{\R}{[\frac{b(x)}{d(x)}-Q(\rho(t))+[Q-P](\rho(t))]u(x,t)dx} \to L,
    \end{align}
   which suggests that $\rho(t)~[Q-P](\rho(t))$ converges to a limit and we proceed to obtain
    \begin{align}\label{eq26}
    \rho(t) \to \rho^{*},\quad \mbox{as}~~t \to \infty.
    \end{align}
    Actually, the function $\rho~[Q-P](\rho)$ is not constant locally because
    \begin{align}
    \rho (Q-P)'+Q-P=\rho Q'=\rho(1+2\rho)>0
    \end{align}
    where we have used \er{eq5} and \er{eq7}. In addition, $\rho(t)$ is Lipschitz continuity owing to \er{eq4}, \er{eq44} and \er{eq5}.

{\it\textbf{Step 4}} (Identification of the limit for $\rho(t)$) \quad In this step, we identify that the limit for $\rho(t)$ is $\rho^*=\overline{\rho}$ where $\overline{\rho}$ fulfills \er{assume2}. The proof is revealed by contradiction.
If $\rho^*>\overline{\rho}$, then there exists $t_0$ such that for $t>t_0$ large enough, from \er{assume2} one has
\begin{align}\label{626}
\max_{x \in \R} G(x,\rho)<\max_{x \in \R} G(x,\overline{\rho})=0
\end{align}
since $G(x,\rho)$ is decreasing with respect to $\rho$. Thus the inequality \er{eq4} implies that there is $T>t_0$ such that $\rho(T)=0$ which is impossible because $\rho(t) \ge \rho_m.$ Similarly, if $\rho^*<\overline{\rho}$, then for time large enough we have
\begin{align}
\max_{x \in \R} G(x,\rho)>0.
\end{align}
So $\rho(t)$ admits exponentially growth on the set where $G(x,\rho)>0$ in $x$, which is contradiction due to $\rho(t) \le \rho_M.$  Collecting the two cases together we assert
\begin{align}\label{240520}
\rho(t) \to \rho^{*}=\overline{\rho},~~\mbox{as}~~t \to \infty.
\end{align}

{\it\textbf{Step 5}} (The weak limit for $u(x,t)$) \quad Thanks to \er{star}, we are able to deduce that the support of the solution remains unchanged which reads as
\begin{align}
\{x \in \R ~\big |~ u(x,t)>0 \}=\{x \in \R ~\big |~ u_0(x)>0 \}=\Omega,~~\forall ~t \ge 0.
\end{align}
If $\Omega$ is bounded, it is directly followed that $u(x,t)$ converges weakly to a measure $u^\ast(x)$ equipped with  $\overline{\rho}=\int_{\R} u^\ast(x) dx$. If $\Omega$ is unbounded, \er{assume5} ensures that for $t$ large enough
\begin{align}
\frac{d}{dt} \int_{|x|>R} u(x,t)dx \le \alpha_R \int_{|x|>R} u(x,t) dx
\end{align}
and thereby we have $\sup_{t>0}\int_{|x|>R} u(x,t)dx \to 0$ for $R$ large enough. This indicates that the family $(u(x,t))_{t>0}$ is compact in the weak sense of measures. Thereby there are subsequences $u(x,t_k)$ that converges weakly to to measures $u^\ast(x)$ and $\overline{\rho}=\int_{\R} u^\ast(x) dx$ as $k \to \infty.$

{\it\textbf{Step 6}} (Identification of $u^\ast(x)$) \quad Now we are ready to identify the limit $u^\ast(x)$. It follows from \er{eq23} and \er{240520} that $u^\ast(x)$ concentrates on the set of $x'$s such that $G(x,\overline{\rho})=0.$ Furthermore, with the help of \er{assume3} and \er{assume2}, the point is unique and thus
\begin{align}
u^\ast(x)=\overline{\rho} \delta(x=\bar{x}).
\end{align}
Therefore the family $u(x,t)$ converges uniformly. Indeed, for $x \neq \bar{x},$ we have $\frac{b(x)}{d(x)}<\frac{b(\bar{x})}{d(\bar{x})}$ and thus
\[
u(x,t)=u_0(x)~e^{ -\int_0^t \left(d(x) \rho(s)-\frac{b(x)}{1+\rho(s)} \right) ds }\to 0,~~\text{as}~~ t \to \infty.
\]
Hence the results are proved.
\end{proof}

\begin{remark}\label{th2}
If the unique maximum point of the function $\frac{b(x)}{d(x)}$ in $\Omega$ (we denote it as $\bar{x}$) belongs to the boundary $\partial \Omega$, the the solution to \er{selection11} converges to a half delta function with a mass of $\bar{rho}$. % (see Section \ref{Numerical}).
\end{remark}

\end{document}